\documentclass{amsart}

\newcommand{\cC}{\mathcal{C}}
\newcommand{\cE}{\mathcal{E}}

\newcommand{\bQ}{\mathbb{Q}}\newcommand{\bR}{\mathbb{R}}
\newcommand{\bT}{\mathbb{T}}

\newcommand{\bZ}{\mathbb{Z}}

\usepackage[dvipsnames,svgnames,x11names,hyperref]{xcolor}
\usepackage{mathtools,url,graphicx,verbatim,amssymb,enumerate,stmaryrd}
\usepackage[pagebackref,colorlinks,citecolor=Bittersweet,linkcolor=Bittersweet,urlcolor=Bittersweet,filecolor=Bittersweet]{hyperref}
\usepackage{microtype,cite}
\usepackage[all,cmtip]{xy}
\usepackage[margin=1.45in]{geometry}
\usepackage{setspace}
\setstretch{1.1}

\newtheorem{theorem}{Theorem}[section]
\newtheorem{lemma}[theorem]{Lemma}
\newtheorem{proposition}[theorem]{Proposition}
\newtheorem{corollary}[theorem]{Corollary}

\theoremstyle{definition}
\newtheorem{definition}[theorem]{Definition}
\newtheorem{example}[theorem]{Example}
\newtheorem{remark}[theorem]{Remark}
\newtheorem{notation}[theorem]{Notation}

\newcommand{\cat}[1]{\mathsf{#1}}
\newcommand{\mr}[1]{{\rm #1}}
\newcommand{\fS}{\mathfrak{S}}
\newcommand{\bfc}{\underline{\mathbf{c}}}
\newcommand{\bfC}{\underline{\mathbf{C}}}

\newcommand{\lra}{\longrightarrow}

\setcounter{tocdepth}{1}

\title{Homological stability for unlinked  circles in a 3-manifold}
\author{Alexander Kupers}
\thanks{Alexander Kupers was supported by a William R. Hewlett Stanford Graduate Fellowship, Department of Mathematics, Stanford University, and was partially supported by NSF grant DMS-1105058. Report number: CPH-SYM-DNRF92.}
\email{kupers@math.ku.dk}
\address{Institut for Matematiske Fag, K\o benhavns Universitet, Universitetsparken 5, 2100 K\o benhavn \O.}
\date{\today}

\begin{document}

\begin{abstract}We prove a homological stability theorem for unlinked circles in $3$-manifolds and give an application to certain groups of diffeomorphisms of 3-manifolds.\end{abstract}

\maketitle

\tableofcontents

\section{Introduction} We start by introducing the spaces of interest. Let $\mr{Emb}(-,-)$ be the space of embeddings and $\mr{Diff}(-)$ the topological group of diffeomorphisms, both in the $C^\infty$-topology.

\begin{definition}Let $M$ be a $3$-manifold, then we define \emph{the space of $k$ unlinked circles} to be
\[\mathcal C_k(M) \coloneqq \mr{Emb}^\mr{unl}\left(\bigsqcup_{k} S^1,M\right)/\,\mr{Diff}\left(\bigsqcup_k S^1\right)\]
where $\mr{Emb}^\mr{unl}(-,-)$ is the subspace of embeddings $\bigsqcup_{k} S^1 \hookrightarrow M$ that extend to an embedding $\bigsqcup_{k} D^2 \hookrightarrow M$.
\end{definition}

\begin{notation}We will denote a point in $\mathcal C_k(M)$ by $\bfc$ and a circle in $\bfc$ by $c$.\end{notation}

Suppose we are given a chart $\bR \times \bR \hookrightarrow \partial M$ and a collar neighborhood $\bR \times \bR \times [0,\infty) \hookrightarrow M$ for this chart. Then we can define a stabilization map
\[t \colon \cC_k(M) \lra \cC_{k+1}(M)\]
as follows. We remark that $\cC_k(-)$ is a continuous functor from the category of $3$-manifolds and embeddings to topological spaces. We pick an embedding $\rho \colon D^3 \cup M \hookrightarrow M$ such that $\rho|_{M}$ is isotopic to the identity and $\rho|_{D^3}$ is given $(x,y,z) \mapsto (x,y,z+2)$ with respect to the collar neighborhood $\bR^3 \times [0,\infty)$. We also fix a point $\bfc_0 \in \cC_1(D^3)$, and define
\[t(\bfc) \coloneqq \rho(\bfc_0 \sqcup \bfc) \colon \cC_k(M) \lra \cC_{k+1}(M)\]

Since $\cC_1(D^3)$ is path-connected and $\rho$ is unique up to isotopy, this is unique up to homotopy.

\begin{theorem}\label{thm.main} If $M$ is path-connected the map $t_* \colon H_*(\cC_k(M)) \to H_*(\cC_{k+1}(M))$ is always injective and an isomorphism for $* \leq \lfloor \frac{k-2}{2} \rfloor$.\end{theorem}

\begin{remark}This result is inspired by work of Palmer \cite{palmerthesis}. Consider a path-connected $m$-dimensional manifold $M$ and a compact $n$-dimensional manifold $N$ such that $m \geq 2n+3$. Palmer proved homological stability for unlinked embeddings of $\bigsqcup_k N$ into $M$ where the stabilization map adds an unlinked copy of the manifold near the boundary. Here an embedding $\bigsqcup_k N \hookrightarrow M$ is said to be \emph{unlinked} if there exists an embedding $\bigsqcup_k \bR^m \hookrightarrow M$ such that each $\bR^m$ contains a single of copy of $N$.\end{remark}

Theorem \ref{thm.main} prompts the question what the stable homology is. This is discussed in Section \ref{sec.stable}. We also give two corollaries of this result. The first corollary is homological stability for spaces of Euclidean unlinked circles in $D^3$. \emph{Euclidean circles} are subsets of $\bR^3$ given by the image of a standard circle $\{(x,y,z) \in \bR^3|\,\sqrt{x^2+y^2}=r \text{ and } z=0\}$ under an isometry of $\bR^3$ with the standard Euclidean metric.

\begin{definition}\label{defeuclcircl}The  \emph{space of $k$ unlinked Euclidean circles in $D^3$} is the subspace of $\cC_k(D^3)$ consisting of Euclidean circles. It is denoted $\cE_k(D^3)$.\end{definition}

Note that this is homotopy equivalent to the space of $k$ unlinked Euclidean circles in $\bR^3$. Our corollary then follows from Theorem 4.1 of \cite{hatcherbrendle}:

\begin{theorem}[Brendle-Hatcher] \label{thmhbrigidifying} The inclusion $\cE_k(D^3) \hookrightarrow \cC_k(D^3)$ is a homotopy equivalence.\end{theorem}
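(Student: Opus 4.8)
This theorem is quoted from Brendle--Hatcher, so there is nothing new to prove; what follows is the route I would take to establish it. The only serious external input is Hatcher's solution of the Smale conjecture, $\mr{Diff}(D^3,\partial D^3)\simeq *$, together with its consequences --- via Hatcher's work on parametrised families of embedded surfaces in irreducible $3$-manifolds --- for spaces of spanning disks.

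The plan is to interpolate between $\cC_k$ and $\cE_k$ through a space of disk systems. Recall that a configuration of $k$ disjoint circles in $I^3$ is unlinked precisely when the circles bound $k$ pairwise disjoint embedded $2$-disks. I would let $\cD_k$ be the space of configurations of $k$ pairwise disjoint embedded $2$-disks in $I^3$ with pairwise disjoint boundary circles, topologised as a space of submanifolds just as $\cC_k$ is, and consider the boundary map $\rho\colon\cD_k\to\cC_k$, which is surjective on path components by the previous sentence. The claim is that $\rho$ is a homotopy equivalence. Passing to the $\fS_k$-covers on which the circles and disks are labelled (harmless for the conclusion), it suffices to show that for a fixed labelled configuration $C=C_1\sqcup\cdots\sqcup C_k$ the space $F_C$ of systems of disjoint labelled disks $\Delta_i$ with $\partial\Delta_i=C_i$ is contractible. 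Using unlinkedness one first deformation-retracts $F_C$, after a compatible isotopy of $C$, onto the locus in which each $\Delta_i$ (thickened by a thin collar) lies inside one of $k$ fixed disjoint round balls $B_i\supset C_i$; on that locus the disjointness condition between distinct disks is automatic, so $F_C$ becomes a product of $k$ copies of the space of spanning disks of an unknot in a ball, each factor contractible by the Smale conjecture. The retraction onto the ``each disk in its ball'' locus is itself an instance of Hatcher's machinery: the space of spanning disks of $C_i$ in $I^3\setminus\bigsqcup_{j\neq i}B_j$, in the isotopy class singled out by disjoint disk systems, retracts onto those contained in $B_i$.

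Next I would linearise the disks. Invoking $\mr{Diff}(D^3,\partial D^3)\simeq *$ again, the space of ways to isotope a single embedded $2$-disk to a flat round one --- thicken it to an embedded $3$-ball, contract $\mr{Diff}(D^3,\partial D^3)$ to straighten the ball, then straighten the disk inside it --- is contractible, and following Hatcher's normal-form arguments for families of disjoint embedded surfaces these straightenings can be performed coherently over $\cD_k$ and compatibly with disjointness. This yields a deformation retraction $r$ of $\cD_k$ onto the subspace $\cD_k^{\mr{rd}}$ of configurations of pairwise disjoint flat round $2$-disks; since such a disk is recovered from its boundary circle, taking boundaries identifies $\cD_k^{\mr{rd}}$ with the subspace $\cE'_k\subseteq\cE_k$ of Euclidean configurations whose flat spanning disks happen to be disjoint. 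Applying $\rho=\partial$ along the homotopy $r$ shows $\rho$ is homotopic to $\cD_k\xrightarrow{\partial\circ r}\cE'_k\hookrightarrow\cC_k$; as $\rho$ and $\partial\circ r$ are equivalences, $\cE'_k\hookrightarrow\cC_k$ is too, and this map factors as $\cE'_k\hookrightarrow\cE_k\hookrightarrow\cC_k$. So it remains to see that $\cE'_k\hookrightarrow\cE_k$ is an equivalence. But $\cE_k$ is an open submanifold (orbifold) of the finite-dimensional space of $k$-tuples (plane, centre, radius) modulo $\fS_k$, and $\cE_k\setminus\cE'_k$ consists of configurations in which the flat disk of one round circle meets that of another --- necessarily, since the circles are disjoint and unlinked, by one circle being nested under or leaning across the other --- and such intersections are removed by an explicit, if fiddly, family of isotopies that translate and tilt a round circle out of the region swept by the disks of the circles it meets without ever letting two circles collide, giving the desired retraction. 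Composing the chain of equivalences then proves the theorem.

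The crux is entirely in the two middle steps: the contractibility of the fibres $F_C$ and the coherent linearisation over a family both require the full parametrised strength of the Smale conjecture and of Hatcher's theory of spaces of embedded surfaces in $3$-manifolds, not merely the $\pi_0$-level facts that an unknot bounds a disk and that a disk can be straightened. By contrast, the reduction to labelled configurations, the reduction to disks in balls, and the concluding finite-dimensional retraction $\cE_k\simeq\cE'_k$ are comparatively routine.
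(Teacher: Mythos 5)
The paper does not contain a proof of this statement: it is imported verbatim as Theorem 4.1 of \cite{hatcherbrendle}, so there is no internal argument to compare yours against line by line. Your outline is, in spirit, the route the paper attributes to Brendle--Hatcher (in the section on generalizations the author says the proof relies on parametrized surgery along $2$-disks and on Hatcher's proof of the Smale conjecture \cite{hatchersmale}): interpolate through systems of spanning disks, use Hatcher's parametrized results to see that spaces of disk systems with prescribed boundary are contractible, and straighten disks using $\mr{Diff}(D^3,\partial D^3)\simeq *$. So as a pointer to the correct machinery, your proposal is consistent with what the paper and the literature say.

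If, however, your text were meant to stand as a proof rather than as a citation, the steps you call routine are exactly where the content sits. First, in the fiber $F_C$ the boundary configuration $C$ is fixed, so you cannot ``deformation-retract $F_C$ after a compatible isotopy of $C$''; you need either to establish that $\rho\colon\cD_k\to\cC_k$ is a fibration (via parametrized isotopy extension) so that you may replace $C$ by a split configuration in its component, or to invoke Hatcher's theorem on spaces of disk systems directly -- the reduction to ``each disk in its own ball'' is itself the hard parametrized statement, not a preliminary simplification. Second, the ``coherent linearisation over $\cD_k$ compatibly with disjointness'' is again precisely the family-level assertion that requires Hatcher's theory; it does not follow from the unparametrized Smale conjecture plus naturality, and your sketch gives no mechanism for making the straightenings continuous in the configuration. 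Third, the final retraction of $\cE_k$ onto the subspace of Euclidean configurations with pairwise disjoint flat disks is not ``comparatively routine'': unlinked round circles can be nested, coplanar, or pass through one another's flat disks, and pushing them out in families without creating intersections is an argument of the same delicacy as the complexity-shrinking proof of theorem \ref{thmhbleqc} (Brendle--Hatcher's Theorem 2.1), whose sketch in the paper shows the kind of care (slowing down shrinking, controlling all ratios simultaneously) that is needed. In short, your proposal is a plausible reconstruction of the Brendle--Hatcher strategy, but each of its middle steps is an appeal to, or would require re-proving, nontrivial theorems of Hatcher and Brendle--Hatcher rather than something your sketch itself establishes.
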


\begin{corollary}\label{thm.cor} The map $t_* \colon H_*(\cE_k(D^3)) \to H_*(\cE_{k+1}(D^3))$ is always injective and an isomorphism for $* \leq \lfloor \frac{k-2}{2} \rfloor$.\end{corollary}

\begin{proof}If we use a collar neighborhood and embedding $\rho$ compatible with the Euclidean structure on $D^3$, and an $x \in \cE_1(D^3) \subset \cC_1(D^3)$, then the stabilization map used above restricts to a stabilization map $t \colon \cE_k(D^3) \to \cE_{k+1}(D^3)$ which fits into a commutative diagram
	\[\xymatrix{  H_*(\cC_k(D^3)) \ar[r]^-{t_*} \ar[d]_\cong &  H_*(\cC_{k+1}(D^3)) \ar[d]^\cong \\
		H_*(\cE_k(D^3)) \ar[r]_-{t_*} & H_*(\cE_{k+1}(D^3))}\]
with the top map an isomorphism in the desired range by Theorem \ref{thm.main}.
\end{proof}

\begin{remark}Hatcher and Wahl proved homological stability for the sequence of groups $\pi_1(\cE_k(D^3))$ in Corollary 1.2 of \cite{hatcherwahl}. These groups are known as \emph{string motion groups}. Wilson proved representation stability for pure string motion groups \cite{wilsonstring, wilsonstring2}, which implies rational homological stability for string motion groups. She also proved vanishing of the rational cohomology in positive degrees. Griffin gave a method to compute the integral cohomology in \cite{griffindiagonal}. It is known that $\cE_k(D^3)$ is not an Eilenberg-MacLane space, because $\cE_k(D^3)$ is homotopy equivalent to a finite CW complex while $\pi_1(\cE_k(D^3))$ contains torsion, see the remarks preceding Theorem 3 of \cite{hatcherbrendle}.\end{remark}

\begin{remark}This paper supersedes an earlier one about homological stability for the spaces $\cE_k(\bR^3)$.
\end{remark}

The second corollary is homological stability for certain groups of diffeomorphisms of 3-manifolds. Consider $M \#_k D^2 \times S^1$ for $M$ path-connected with non-empty boundary $\partial M$, and fix a diffeomorphism $\sqcup_k \bT^2 \to \partial(M \#_k D^2 \times S^1) \setminus \partial M$. Let $\mr{Diff}_{\partial M}(M \#_k D^2 \times S^1,\fS_\partial)$ be the topological group of diffeomorphisms $f$ that fix $\partial M$ pointwise and such that there exists a permutation $\sigma$ of $\{1,\ldots,k\}$ so that $f \circ \psi|_{\bT^2_i} = \psi|_{\bT^2_{\sigma(i)}}$. The following is proven in Section \ref{sec.generalizations}.

\begin{corollary}\label{corhomstabdiff} There is a map
	\[(t')_* \colon H_*(B\mr{Diff}_{\partial M}(M \#_k D^2 \times S^1,\fS_\partial)) \lra H_*(B\mr{Diff}_{\partial M}(M \#_{k+1} D^2 \times S^1,\fS_\partial))\]
which is an isomorphism for $* < \lfloor \frac{k-4}{2} \rfloor$ and a surjection for $* \leq \lfloor \frac{k-2}{2} \rfloor$.\end{corollary}

\subsection{Acknowledgments} We thank S\o ren Galatius, Allen Hatcher, Martin Palmer and the referee for helpful comments and conversations.

\section{The stable homology} \label{sec.stable} Whenever one proves a homological stability result, one next attempts to compute the stable homology. There are two obvious candidates for the stable homology, given by maps
\[C_k(\bR^3;\bR P^2) \lra \cC_k(\bR^3) \lra \Omega^3_0 \mr{Thom}(T\bR P^2)\] 
In this subsection we show that neither of these maps induces an isomorphism on $H_1$ for $k \gg 0$. To do so, we start with computing $H_1(\cC_k(\bR^3))$.

\begin{lemma}For $k \geq 2$ we have that $H_1(\cC_k(\bR^3)) \cong (\bZ/2\bZ)^3$.\end{lemma}

\begin{proof}By Theorem \ref{thmhbrigidifying} it suffices to compute $H_1(\cE_k(\bR^3))$. The group $H_1(\cE_k(\bR^3))$ can be computed by abelianizing the presentation of $\pi_1(\cE_k(\bR^3))$ in Proposition 3.7 of \cite{hatcherbrendle}.\end{proof}

First, we describe the map $C_k(\bR^3;\bR P^2) \to \cC_k(\bR^3)$. Let $C_k(\bR^3;\bR P^2)$ be the configuration space of $k$ unordered distinct points labeled by lines in $\bR P^2$:
\[C_k(\bR^3;\bR P^2) = \{((x_1,L_1),\ldots,(x_k,L_k)) \in (\bR^3 \times \bR P^2)^k | \, x_i \neq x_j \text{ for $i \neq j$}\}/\fS_k\]
The map $C_k(\bR^3;\bR P^2) \to \cC_k(\bR^3)$ sends $\{(x_i,L_i)\}_{i=1}^k$ to the configuration of $k$ circles $\bfc = \{c_i\}$ with $c_i$ the circle in the plane through $x_i$ orthogonal to $L_i$, with center $x_i$ and radius $\frac{1}{3} \min_{i \neq j}(||x_i-x_j||)$. The following implies that this map does \emph{not} induce an isomorphism on $H_1$ for $k \gg 0$.

\begin{lemma}For $k \geq 2$ we have that $H_1(C_k(\bR^3;\bR P^2)) \cong (\bZ/2\bZ)^2$.\end{lemma}

\begin{proof}Theorem A of \cite{RW} says that $H_1(C_2(\bR^3;\bR P^2)) \cong H_1(C_k(\bR^3;\bR P^2))$ for all $k \geq 2$. Let $F_2(\bR^3;\bR P^2)$ denote the labeled configuration space, defined as \[F_2(\bR^3;\bR P^2) \coloneqq \{((x_1,L_1),(x_2,L_2)) \in (\bR^3 \times \bR P^2)^2 | \, x_1 \neq x_2\}\]
There are two fiber sequences
\[\bR P^2 \times \bR P^2 \to F_2(\bR^3;\bR P^2) \to S^2\]
\[F_2(\bR^3;\bR P^2) \to C_2(\bR^3;\bR P^2) \to B\mathfrak S_2\]
The first has a section, so that $\pi_1(F_2(\bR^3;\bR P^2)) \cong \bZ/2\bZ^2$. Then the second implies that $\pi_1(C_2(\bR^3;\bR P^2)) \cong \bZ/2\bZ \wr \bZ/2\bZ$ with abelianization $(\bZ/2\bZ)^2$, so that $H_1(C_2(\bR^3;\bR P^2)) \cong (\bZ/2\bZ)^2$.\end{proof}

Secondly, the map $\cC_k(\bR^3) \to \Omega^3 \mr{Thom}(T\bR P^2)$ is the restriction of the scanning map defined in \cite{gmtw} to configurations of $k$ unlinked circles in $\bR^3$. We sketch its definition. The Thom space $\mr{Thom}(T\bR P^2)$ is the one-point compactification of the total space of $T\bR P^2$ and can be thought of as the space of affine lines in $\bR^3$, including a single line at infinity. We define
\[s \colon \cC_k(\bR^3) \lra \Omega^3 \mr{Thom}(T\bR P^2)\]
by giving its adjoint $S^3 \wedge (\cC_k(\bR^3))_+ \to \mr{Thom}(T\bR P^2)$. We think of $S^3$ as $\bR^3 \cup \{\infty\}$. For each configuration of unlinked  circles $\bfc$ in $\bR^3$ there exists an $\epsilon > 0$ such that for all $x \in \bR^3$ the ball $B_\epsilon(x) \cap \bfc$ is either  empty or contains a unique point closest to $x$. This $\epsilon$ gives an identification $B_\epsilon(x) \cong \bR^3$ and can be chosen continuously in $\bfc$.  Then a pair $(x,\bfc)$ is sent to either (i) the point at infinity if $B_{\epsilon}(x) \cap \bfc = \varnothing$ or (ii) to the affine line tangent to the circle at the unique closest point in $B_{\epsilon}(x) \cap \bfc$, using the identification $B_\epsilon(x) \cong \bR^3$. The point at infinity in $S^3$ is sent to the point at infinity in $\mr{Thom}(T\bR P^2)$.

Since $\cC_k(\bR^3)$ is path-connected, the scanning map lands in a single path component of $\Omega^3 \mr{Thom}(T\bR P^2)$ of $\Omega^3 \mr{Thom}(T\bR P^2)$, without loss of generality the $0$th one as all are weakly equivalent. The following implies that $s$ does \emph{not} induce an isomorphism on $H_1$ for $k \gg 0$.

\begin{lemma}We have that $H_1(\Omega^3_0 \mr{Thom}(T\bR P^2);\bQ) \cong \bQ$.\end{lemma}

\begin{proof}It suffices to show $\pi_4(\mr{Thom}(T\bR P^2)) \otimes \bQ \cong \bQ$. This will follow from the rational Hurewicz theorem and the following computation. Since $T\bR P^2$ is not orientable, there is a Thom isomorphism $\tilde{H}_{*+2}(\mr{Thom}(T\bR P^2);\bQ) \cong H_*(\bR P^2;\bQ_\omega)$ with $\bQ_\omega$ the non-trivial orientation local system. Poincare duality says $H_*(\bR P^2;\bQ_\omega) \cong H^{2-*}(\bR P^2;\bQ)$, so that
\[\tilde{H}_{*}(\mr{Thom}(T\bR P^2)) \cong \begin{cases} 0 & \text{if $* \neq 4$} \\
\bQ & \text{if $*=4$}
\end{cases}\]\end{proof}

We end with a positive result.

\begin{lemma}The stable homology of $\cC_k(\bR^3)$ is equal to that of a $3$-fold loop space.\end{lemma}

\begin{proof}Note that $\bigsqcup_k \cC_k(D^3)$ is an $E_3$-algebra. This implies that its stable homology is isomorphic to that of the $0$th component of the group completion $\Omega B(\bigsqcup_k \cC_k(D^3))$. This is a group-like $E_3$-algebra, and by the recognition principle weakly equivalent to a 3-fold loop space \cite{M}. \end{proof}

\begin{remark}The map $\Omega B(\bigsqcup_k \cC_k(D^3))\to \Omega^\infty \mr{Thom}(T\bR P^2))$ induced by the scanning map is homotopic to a 3-fold loop map.\end{remark}

\section{Simplicial techniques} \label{sec.simpl} To prove Theorem \ref{thm.resconn}, we will need a number simplicial techniques. 

\subsection{Definitions} First we recall some definitions.

\begin{definition} Let $\Delta_\mr{inj}$ be the category with objects the finite non-empty sets $[p] = \{0,\ldots,p\}$ for $p \geq 0$ and morphisms the injective order-preserving maps. A \emph{semisimplicial set} is a functor $X_\bullet \colon \Delta_\mr{inj}^\mr{op} \to \cat{Set}$. Such a functor is equivalent to a collection of sets $X_p$ of \emph{$p$-simplices} for $p \geq 0$ with \emph{face maps} $d_i \colon X_p \to X_{p-1}$ satisfying the face relations. Similarly, a \emph{semisimplicial space} is a functor $X'_\bullet \colon \Delta_\mr{inj}^\mr{op} \to \cat{Top}$. A \emph{semisimplicial map} is a natural transformation of functors $\Delta_\mr{inj}^\mr{op} \to \cat{Set}$ or $\Delta_\mr{inj}^\mr{op} \to \cat{Top}$.\end{definition}

\begin{definition}A \emph{simplicial complex} $Y_\circ$ is a set $Y_0$ of \emph{vertices} and for each $p \geq 1$ a collection $Y_p$ of \emph{$p$-simplices} consisting of $(p+1)$ element subsets of $Y_0$. These must have the property that each $p$-element subset of an element of $Y_p$ is in $Y_{p-1}$. A \emph{simplicial map} is a map of vertex sets that sends simplices to simplices (it may send $p$-simplices to $q$-simplices).\end{definition}

\begin{notation}We use $(-)_\bullet$ for semisimplicial sets or spaces and $(-)_\circ$ for simplicial complexes.\end{notation}

\begin{definition}An \emph{augmentation} for a semisimplicial set or space $X_\bullet$ is a map $\epsilon \colon X_0 \to X_{-1}$ such that $\epsilon d_0 = \epsilon d_1$.\end{definition}

\subsection{Geometric realization} Both semisimplicial sets or spaces and simplicial complexes can be geometrically realized by glueing together simplices using the face maps or face relations, an operation denoted by $||X_\bullet||$ and $|Y_\circ|$ respectively:
\[||X_\bullet|| \coloneqq \bigsqcup_{p \geq 0} X_p \times \Delta^p/\sim_\bullet \qquad \qquad |Y_\circ| \coloneqq \bigsqcup_{p \geq 0} Y_p \times \Delta^p/\sim_\circ\]
Semisimplicial and simplicial maps induce continuous maps on geometric realization. A important result on geometric realization is the realization lemma, Proposition 2.7 of \cite{grwstab1}.

\begin{lemma}\label{lem.geomlevelwise} If $f_\bullet \colon X_\bullet \to Y_\bullet$ is a semisimplicial map between semisimplicial spaces such that $f_p$ is $(n-p)$-connected, then $||f_\bullet||$ is $n$-connected.\end{lemma}

\subsection{Weakly Cohen-Macauley complexes} We will be interested in certain nice simplicial complexes, the so-called \emph{weakly Cohen-Macauley complexes}, a generalization of the notion of a PL manifold.

\begin{definition}Given a $p$-simplex $\sigma$ of $X_\circ$, its \emph{link} $\mr{Link}_\sigma(X)$ is the full subcomplex of all simplices $\tau$ such that $\sigma \cap \tau = \varnothing$ and $\sigma \cup \tau$ is a simplex. Its \emph{star} $\mr{Star}_\sigma(X)$ is the full subcomplex spanned by $\sigma$ and its link.\end{definition}

\begin{definition}\label{def.wcm} A simplicial complex $X_\circ$ is said to be \emph{weakly Cohen-Macauley of dimension $\geq n$} if $|X_\bullet|$ is $(n-1)$-connected and the link of each $p$-simplex is $(n-p-2)$-connected. We denote this by $\mr{wCH}(X_\circ) \geq n$.\end{definition}

\begin{example}\label{exam.inj} An example of a weakly Cohen-Macauley complex is the simplicial complex $\mr{Inj}_\circ(S)$ of injective words on a set $S$. The vertices are the elements of $S$ and a collection $\{s_0,\ldots,s_p\}$ is a $p$-simplex if $s_i \neq s_j$ for $i \neq j$. Note that $|\mr{Inj}_\circ(S)| \cong \Delta^{|S|-1}$. It is thus contractible, as are links of all $p$-simplices except for the $(|S|-1)$-simplex, whose link is empty. From this one deduces that $\mr{wCH}(\mr{Inj}_\circ(S)) \geq |S|-1$.\end{example}

Being weakly Cohen-Macauley is inherited by links; Lemma 2.1 of \cite{grwstab1} says that if $\mr{wCH}(X_\circ) \geq n$, then $\mr{wCH}(\mr{Link}_\sigma(X)) \geq n-|\sigma|-1$.

\subsection{Lifting} One advantage of weakly Cohen-Macauley complexes is the following lifting lemma, a special case of Lemma 2.4 of \cite{perlmutterstab}.

\begin{proposition}\label{prop.lift} Let $f \colon X_\circ \to Y_\circ$ be simplicial map between simplicial complexes, then $\mr{wCM}(X_\circ) \geq n$ if the following conditions are met:
	\begin{enumerate}[(i)]
		\item $f$ has the link lifting property, i.e. if $y \in Y_\circ$ is a vertex and $A$ is a collection of vertices in $X_\circ$ such that $f(a) \in \mr{Link}_y(Y_\circ)$ for all $a \in A$ then there exists a vertex $x \in X_\circ$ such that $f(x) = y$ and $a \in \mr{Link}_x(X_\circ)$ for all $a \in A$,
		\item $\mr{wCM}(Y_\circ) \geq n$,
		\item $f(\mr{Link}_\sigma(X_\circ)) \subset \mr{Link}_{f(\sigma)}(Y_\circ)$ for all simplices $\sigma$ of $X$.
	\end{enumerate}
\end{proposition}

\subsection{Flag complexes}
We will be interested in particular types of simplicial complexes and semisimplicial spaces common to homological stability arguments.

\begin{definition}A \emph{flag complex} is a simplicial complex with the property that $(k+1)$-tuple $\{x_0,\ldots,x_k\}$ of vertices is a $k$-simplex if and only if $\{x_i,x_j\}$ is a 1-simplex for each $i \neq j$.\end{definition}

If two vertices form a 1-simplex they are said to be \emph{compatible}. The complex of injective words is an example of a flag complex. There is also a version of this definition for semisimplicial sets or spaces.

\begin{definition}An \emph{(ordered) topological flag complex} is a semisimplicial space $Y_\bullet$ with the following two properties:
	\begin{enumerate}[(i)]
		\item The map $Y_p \to Y_0 \times \ldots \times Y_0$ is a homeomorphism onto its image, which is an open subset.
		\item The composite $Y_p \to Y_0^p \to (Y_0^p)/\fS_p$ is injective.
		\item An ordered $(p+1)$-tuple $(y_0,\ldots,y_p)$ forms a $p$-simplex if and only if $(y_i,y_j)$ for each $i < j$ lies in $Y_1 \subset Y_0 \times Y_0$.
	\end{enumerate}
\end{definition}

Note that conditions (ii) and (iii) endow the elements of $Y_0$ with a partial order: $y_0 \prec y_1$ if $y_0$ and $y_1$ are compatible and $(y_0,y_1) \in Y_1$. Condition (iii) then says that an ordered $(p+1)$-tuple $(y_0,\ldots,y_p)$ forms a $p$-simplex if and only if $y_i \prec y_{i+1}$ for $0 \leq i \leq p-1$.

To a topological flag complex $Y_\bullet$ one can associate a semisimplicial set $Y^\delta_\bullet$ by forgetting the topology. From this semisimplicial set we can produce a flag complex $Y^\delta_\circ$, which has vertices given by the elements of the set underlying $Y_0$. A $(p+1)$-tuple $\{y_0,\ldots,y_p\}$ of such elements forms $p$-simplex in $Y^\delta_\circ$ if and only if there is a permutation $\sigma \colon \{0,\ldots,p\} \to \{0,\ldots,p\}$ such that $(y_{\sigma(0)},\ldots,y_{\sigma(k)}) \in Y_k$. The uniqueness of the permutation in condition (ii) tells us that $Y^\delta_\bullet$ can be recovered from $Y^\delta_\circ$ and that there is a homeomorphism $||Y_\bullet^\delta|| \cong |Y_\circ^\delta|$.

To prove that a topological flag complex is highly-connected, one may the following consequence of Theorem 2.8 of \cite{perlmutterstab}, which is based on a technique in \cite{grwstab1}.

\begin{proposition}\label{prop.top} Let $X_\bullet$ be a topological flag complex and suppose that $\mr{wCM}(X^\delta_\circ) \geq n$. Then the geometric realization $|X_\bullet|$ is $(n- 1)$-connected.\end{proposition}

\subsection{The subset construction} Let $\mr{sd}(X)_\circ$ denote the barycentric subdivision of $X_\circ$. This is the nerve of the poset of simplices under inclusion, so that $\mr{sd}(X)_\circ$ arise from a simplicial set $\mr{sd}(X)_\bullet$. Let $\mr{sd}_r(X)_\circ$ be the subcomplex spanned by the vertices corresponding to $k$-simplices of $X_\circ$ satisfying $k \geq r$. For example, $\mr{sd}_0(X)_\circ = \mr{sd}(X)_\circ$ and thus its geometric realization is homeomorphic to that of $X_\circ$.

\begin{lemma}\label{lem.subsetconstr} If $\mr{wCH}(X_\circ) \geq n$, then $\mr{sd}_r(X)_\circ$ is $(n-r-1)$-connected.\end{lemma}

\begin{proof}The proof is by induction over $r$, the case $r=0$ being trivial. For the induction step, observe that $|\mr{sd}_{r-1}(X)_\circ|$ is obtained from $|\mr{sd}_r(X)_\circ|$ by attaching the cones on $|\mr{Link}_\sigma(X)|$, for $\sigma$ ranging over the $(r-1)$-simplices of $X_\circ$. These links are $(n-r-1)$-connected, so that the inclusion $|\mr{sd}_r(X)_\circ| \hookrightarrow |\mr{sd}_{r-1}(X)_\circ|$ is $(n-r)$-connected. Since the target is $(n-r)$-connected, this implies the source is $(n-r-1)$-connected.
\end{proof}

\section{The resolution} Quillen's homological stability argument starts with resolving the spaces of interest by a semisimplicial space whose $p$-simplices contain the data to ``undo'' $(p+1)$ applications of the stabilization map. In this section we define this resolution and show it is highly-connected.

\subsection{The resolution} We fix a 3-manifold $M$ with boundary $\partial M$ and chart $\bR \times \bR \hookrightarrow \partial M$. We extend this chart by a collar neighborhood $\bR\times \bR\times [0,\infty) \hookrightarrow M$. 

\begin{definition}Let $\tilde{\cC}_k(M)$ denote the $k$-fold cover of $\cC_k(M)$ with points given by a pair $(\bfc,c)$ of $\bfc \in \cC_k(M)$ and circle $c \in \bfc$.\end{definition}

\begin{definition}The semisimplicial space $X_\bullet(k,M)$ is given as follows:
\begin{itemize}
	\item the space of $0$-simplices consists of the data 
	\[((\bfc,c),\tau,\epsilon,\eta) \in \tilde{\cC}_k(M) \times \bR \times (0,1) \times \mr{Emb}(D^2 \times [0,1],M)\]
	such that (i) $\eta(D^2 \times [0,1]) \cap \bfc = c$, (ii) $\eta^{-1}(c) = \partial D^2 \times \{1\}$, and (iii) near $D^2 \times \{0\}$ the embedding $\eta$ is given by $(x,y,s) \mapsto (\epsilon x+\tau,\epsilon y,s)$ with respect to the collar neighborhood.
	\item a $p$-simplex is a $(p+1)$-tuple $((\bfc_i,c_i),\tau_i,\epsilon_i,\eta_i)$ such that (i') $\eta_i(D^2 \times [0,1]) \cap \eta_j(D^2 \times [0,1]) = \varnothing$ if $i \neq j$, and (ii') $\tau_0< \ldots<\tau_p$.
\end{itemize}
\end{definition}

This is an ordered topological flag complex with augmentation to $\cC_k(M)$. The main result of this section will be the following theorem.

\begin{theorem}\label{thm.resconn} For all path-connected $M$ and $k \geq 0$, the augmentation $||X_\bullet(M)|| \to \cC_k(M)$ is $\lfloor\frac{k-3}{2}\rfloor$-connected.\end{theorem}

\subsection{A complex of disks} To prove Theorem \ref{thm.resconn}, we define spaces of thickened circles and a complex of bounding disks. Let $S^1 \hookrightarrow D^3$ be an unknot, then we can thicken this to an embedding $D^2 \times S^1 \hookrightarrow D^3$. Fix one of these unknotted embeddings of a thickened circle.

\begin{definition}We define \emph{the space of $k$ unlinked thickened circles in $M$} to be
	\[\cC^\mr{th}_k(M) \coloneqq \mr{Emb}^\mr{unl}\left(\bigsqcup_{k}D^2 \times S^1,M\right)/\,\mr{Diff}\left(\bigsqcup_k D^2 \times S^1\right)\]
	where $\mr{Emb}^\mr{unl}(-,-)$ is the subspace of embeddings $\bigsqcup_{k} D^2 \times S^1 \hookrightarrow M$ that extend to an embedding $\bigsqcup_{k} D^3 \hookrightarrow M$ with $D^2 \times S^1 \hookrightarrow D^3$ as above.
\end{definition}

As before, we let $\tilde{\cC}^\mr{th}_k(M)$ be the $k$-fold cover consisting of a pair $(\bfC,C)$ of a $\bfC \in \cC^\mr{th}_k(M)$ and a thickened circle $C \in \bfC$. We let $C(\bfC) \subset M$ denote the union of the thickened circles. The complement of the interior of $C(\bfC)$ is a manifold with boundary. Given a 3-manifold $M$ with boundary $\partial M$, the space of \emph{neat embeddings} of $D^2$ into $M$ is the subspace of the space of injective smooth maps $D^2 \to M$ with the $C^\infty$-topology which are locally modeled on the inclusions $\bR^2 \hookrightarrow \bR^3$ and $\bR \times [0,\infty) \hookrightarrow \bR^2 \times [0,\infty)$.

\begin{definition}Fix a $\bfC \in \cC^\mr{th}_k(M)$. The simplicial complex $D_\circ(\bfC,M)$ is given as follows:
	\begin{itemize}
		\item the vertices consist of pairs 
		\[((\bfC,C),\delta) \in \tilde{\cC}^\mr{th}_k(M) \times  \mr{Emb}^\mr{neat}(D^2,M \setminus \mr{int}(C(\bfC)))\]
		such that (i) $\delta(\partial D^2) \subset \partial C$, and (ii) $\delta|_{\partial D^2} \colon \partial D^2 \to C$ represents a generator of $\pi_1(C) \cong \bZ$,
		\item a $k$-simplex is a $(k+1)$-tuple of $((\bfC,C_i),\delta_i)$ such that $\delta_i(D^2) \cap \delta_j(D^2) = \varnothing$ if $i \neq j$.
	\end{itemize}
\end{definition}

This is a flag complex. Note that in this definition more than one disk may be attached to the boundary of a thickened circle. We will address this issue later.

Two neat embeddings $\delta_0,\delta_1 \colon D^2 \hookrightarrow M \setminus \mr{int}(C(\bfC))$ are \emph{transverse} if they are transverse on $\partial D^2$ and $\mr{int}(D^2)$. Note that like transversality for closed submanifolds, this is an open condition.

\begin{lemma}\label{lem.dbfcmcontractible}We have that $D_\circ(\bfC,M)$ is weakly contractible.\end{lemma}

\begin{proof}By simplicial approximation it suffices to show that for every $i \geq -1$, PL triangulation $L_\circ$ of $S^i$ and simplicial map $f \colon L_\circ \to D_\circ(\bfC,M)$, we can extend $L_\circ$ to a PL triangulation $K_\circ$ of $D^{i+1}$ and $f$ to a simplicial map $K_\circ \to D_\circ(\bfC,M)$. Denote the thickened circle and embedding associated by $f$ to a vertex $v \in L_0$ by $C_v$ and $\delta_v$ respectively. We also write $\delta_v$ for the image of $\delta_v$.
	
We can always find a $((\bfC,C_0),\delta_0)$ with $\delta_0 \colon D^2 \to M  \setminus \mr{int}(C(\bfC))$ transverse to $\delta_v$ for all vertices $v \in L_\circ$: one makes $\delta_0$ transverse to $\delta_v$ using Sard's lemma one $v$ at a time and uses that the condition of transversality is open. We will homotope $f$ into the link of $((\bfC,C_0),\delta_0)$ in $D_\circ(\bfC,M)$ and then cone off the sphere by noting that a link of a vertex is null-homotopic in its vertex, i.e. take the cone on $((\bfC,C_0),\delta_0)$.
	
By transversality $\delta_0 \cap \delta_v$ is a finite disjoint union of circles and arcs. We will first homotope $f$ to remove all circles and then homotope $f$ again to remove all arcs. This is done by a version of a procedure by Hatcher, see Section 2 of \cite{hatcherautfn} or Section 7 of \cite{hanglamthesis}. One starts by picking for $\delta_0$ a \emph{tubular neighborhood}, that is, an embedding $\tilde{\delta}_0 \colon (-1,1) \times D^2 \hookrightarrow M$ with the following properties:
\begin{enumerate}[(i)]
	\item $\tilde{\delta}_0(s,d) = \delta_0(d)$ if $s=0$
	\item $\tilde{\delta}_0((-1,1) \times \partial D^2) \subset \partial C_0$.
\end{enumerate}
Similarly pick tubular neighborhoods $\tilde{\delta}_v$ for $\delta_v$ and impose the following conditions on these tubular neighborhoods:
\begin{enumerate}[(a)]

	\item By shrinking the tubular neighborhoods further, we may assume that if $v$ and $v'$ form a $1$-simplex in $L_\circ$, then $\tilde{\delta}_v$ and $\tilde{\delta}_{v'}$ have disjoint image.
	\item We claim that after shrinking and isotoping the tubular neighborhood, we may assume for each component $P$ of $\delta_v \cap \delta_0$, there exists a $\lambda_P \in (-1,1) \setminus \{0\}$ such that $\tilde{\delta}_v(s,\delta_v^{-1}(p)) = \tilde{\delta}_0(\lambda_P s,\delta_0^{-1}(p))$. This is proven in Lemma \ref{lem.tubulartransversality}.	
	\item Since transversality is an open condition, by shrinking the tubular neighborhoods we may assume that $\tilde{\delta}_v|_{\{s\} \times D^2}$ is transverse to $\tilde{\delta}_0(D^2)$ for all $v$ and $s \in (-1,1)$. 
\end{enumerate}

Now Hatcher's procedure starts.\\

\textbf{Step 1: removing circles.} The proof is by induction over the sum  over all vertices $v \in L_\circ$ of the number of circles in the intersection $\delta_0 \cap \delta_v$. We will show how to reduce this number by $1$.

We start by picking a vertex $v$ and circle $\gamma \subset \delta_0 \cap \delta_v$ which is \emph{innermost}. This means that that there is no circle in $\bigcup_v (\delta_0 \cap \delta_v)$ which is contained in the disk $D_0$ in $\delta_0$ bounded by $\gamma$. Of course there may be circles or arcs in $\bigcup_v (\delta_0 \cap \delta_v)$ intersecting $\gamma$, but note that these come from vertices $v'$ that are \emph{not} in the star of $v$.

Recall that $D_0$ is the disk in $\delta_0$ bounded by $\gamma$, and let $D_v$ be the disk in $\delta_v$ bounded by $\gamma$. Now we intend to replace $\delta_v$ by doing surgery along $D_0$. For an $\epsilon \in (0,1)$  consider the two sets, which are disks by condition (b), given by
\[\tilde{\delta}_v(\{\pm \epsilon\} \times D^2 \setminus \delta_v^{-1}(D_v)) \cup \tilde{\delta}_0(\{\pm \lambda_\gamma \epsilon\} \times \delta_0^{-1}(D_0))\]
one of which must intersect $\delta_0$ in one circle less. For convenience say this is the disk corresponding to $+\epsilon$. It does not intersect $\delta_{v'}$ for any vertex in the star of $v$ (including $v$ itself), as the annulus $\tilde{\delta}_v(\{\epsilon\} \times D^2 \setminus \delta_v^{-1}(D_v))$ must be disjoint $\delta_{v'}$ by (a) and the inner disk $\tilde{\delta}_0(\{\lambda_\gamma \epsilon\} \times \delta_0^{-1}(D_0))$ is disjoint from $\delta_{v'}$ because the circle was innermost. It is also transverse to $\delta_0$ by property (c) (note that it has corners, but the corners do not intersect $\delta_0$ so this makes sense). We can smooth the corners while preserving these properties, as smoothing corners can be done in an arbitrarily small neighborhood of the corners. Finally, we parametrize the result by $\delta_v^\mr{new} \colon D^2 \hookrightarrow M$. 

We claim that $f$ is homotopic to a new map with its value $((\bfC,C_v),\delta_v)$ on $v$ replaced by $((\bfC,C_v),\delta_v^\mr{new})$. To see this, note we can glue the disk $\mr{Link}_v(L_\circ) \ast \Delta^1$ to $L_\circ$ by identifying $\mr{Link}_v(L_\circ) \ast \{0\}$ with $\mr{Link}_v(L_\circ) \ast v$. Then the map $f$ extends to a simplicial map $L_\circ \cup \mr{Link}_v(L_\circ) \ast \Delta^1 \to D_\circ(\bfC,M)$ by sending $\{1\} \in \Delta^1$ to $((\bfC,C_v),\delta_{v}^\mr{new})$, which gives the desired homotopy.

At this point we restart step 1, after picking a new tubular neighborhood for $\delta_{v}^\mr{new}$.\\

\textbf{Step 2: removing arcs.} By repeating step 1 until no circles are left, we may assume that $\bigcup_v (\delta_0 \cap \delta_v)$ consists of only arcs. We will prove that we can remove them by induction over the total number of arcs, by showing how to reduce this number by $1$.

We start by picking a vertex $v$ and an arc $\gamma \subset \delta_0 \cap \delta_v$ which is \emph{outermost}, which means that there is a component of $\delta_0 \setminus \gamma$ which does not contain any arcs. Note that necessary $\delta_0$ and $\delta_v$ are attached to the same circle $C$. Of course there may be arcs in $\bigcup_v (\delta_0 \cap \delta_v)$ which intersect $\gamma$, but as before these come from vertices $v'$ that are not in the star of $v$. We pick a component $K_0$ of $\delta_0 \setminus \gamma$ which does not contain any arcs, in case there is more than one. Its closure is diffeomorphic to a half disk $D^2_+$. 

Now we do a similar surgery construction as before, using a half-disk instead of a disk. Recall that $K_0$ is a half-disk in $\delta_0$ bounded by $\gamma$. There are two half-disks $K_v$ and $K'_v$ in $\delta_v$ bounded by $\gamma$, and we claim that at least one of $(K_0 \cup K_v) \cap C$, $(K_0 \cup K'_v) \cap C$ represents a generator of $\pi_1(C) \cong \bZ$. Without loss of generality this $K_v$ has this property.

We will prove this using a homological argument. We first fix notation $\alpha_v = \delta_v \cap C$, $\beta_v = (K_0 \cup K_v) \cap C$ and $\beta'_v = (K_0 \cup K'_v) \cap C$. These are not oriented, so we pick an orientation for them. Pick a symplectic basis of $H_1(\partial C)$ such that $e_1$ maps to a generator of $H_1(C) \cong \bZ$ and $e_2$ maps to $0 \in H_1(C)$. Without loss of generality $[\alpha_v] = e_1+a_v e_2$, and we also write $[\beta_v] = b_1 e_1 + b_2 e_2$ and $[\beta'_v] = b'_1 e_1 + b_2' e_2$. By construction $\beta_v$ and $\beta_v'$ are homotopic to loops that do not intersect $\alpha_v$, so that $[\alpha_v] \cdot [\beta_v] = b_2-a_v b_1 =  0 =  b_2'-a_v b_1' = [\alpha_v] \cdot [\beta_v']$. By construction $[\beta_v]-[\beta_v'] = [\alpha_v]$, so that $b_1-b_1' = 1$ and $b_2 - b_2' = a_v$. The only solutions these equations are $[\beta_v] = (c+1)(e_1+a_v e_2)$ and $[\beta_v']  = c(e_1+a_v e_2)$. Finally, we remark that both $\beta_v$ and $\beta_v'$ are embedded. This implies that either both their coefficients are $0$, or the greatest common divisor of their coefficients is $1$, see Theorem 2.C.2 of \cite{rolfsen}. We conclude that $c = 0,-1$, proving the claim.

 Now we do surgery along the half-disk. For $\epsilon > 0$ small consider the two disks
\[\tilde{\delta}_v(\{\pm \epsilon\} \times D^2 \setminus \delta_v^{-1}(K_v)) \cup \tilde{\delta}_0(\{\pm \lambda_\gamma \epsilon\} \times K_0)\]
one of which must intersect $\delta_0$ in one arc less.  For convenience say this is the disk corresponding to $+\epsilon$. It does not intersect $\delta_{v'}$ for any vertex in the star of $v$ (including $v$ itself), as the half-disk $\tilde{\delta}_v(\{\epsilon\} \times D^2 \setminus \delta_v^{-1}(K_v))$ must be disjoint from $\delta_{v'}$ by (b) and the half-disk $\tilde{\delta}_0(\{\lambda_\gamma \epsilon\} \times \delta_0^{-1}(K_0))$ is disjoint from $\delta_{v'}$ because the arc was outermost. Now the argument finishes as in Step 1.
\end{proof}

\begin{lemma}\label{lem.tubulartransversality} Suppose we have two codimension one embeddings $\delta_0 \colon N_0 \hookrightarrow M$ and $\delta_1 \colon N_1 \hookrightarrow M$ of compact submanifolds, which intersect transversally. Suppose further we are given tubular neighborhoods $\tilde{\delta}_i \colon (-1,1) \times N_i \hookrightarrow M$ extending the embeddings $\delta_i$. Then we may isotope $\tilde{\delta}_1$ through $\tilde{\delta}_{1,t}$ for $t \in [0,1]$ such that (i) $\tilde{\delta}_{1,t} \subset \tilde{\delta}_1$ for all $t$, (ii) $\tilde{\delta}_{1,t}|_{N_1 \times \{0\}} = \delta_1$ for all $t$, and (iii) for each component $P$ of $N_0 \cap N_1$ there exists an $\lambda_P \in (-1,1) \setminus \{0\}$ such that $\tilde{\delta}_{1,1}(s,\delta_{1}^{-1}(p)) = \tilde{\delta}_0(\lambda_P s,\delta_0^{-1}(p))$.
\end{lemma}

\begin{proof}We arrange this one component $P$ at a time,working in a small enough neighborhood of $P$ to not modify $\tilde{\delta}_1$ near other components $P'$ of $N_0 \cap N_1$. By shrinking $\tilde{\delta}_1$ we may assume that $\tilde{\delta}_1((-1,1) \times P) \subset \tilde{\delta}_0((-1,1) \times N_0)$. This gives us a map $\pi_2 \circ \tilde{\delta}_0^{-1} \circ \tilde{\delta}_1 \colon (-1,1) \times P \to (-1,1)$ which equals $0$ when restricted to $\{0\} \times P$. Consider the adjoint map $\gamma_P \colon P \to \mr{Map}((-1,1),(-1,1))$. By transversality $\gamma_P(p)$ has non-zero derivative at $0$ for all $p \in P$ and thus there exists a family $\ell_{P,t} \colon P \to \mr{Emb}((-1,1),(-1,1))$ for $t \in [0,1]$ starting at the identity such that (i) $\ell_{P,t}(p)$ fixes $0$ for all $t$ and $p$, and (ii) $\gamma_P \circ \ell_{P,1}$ is constant equal to $(s \mapsto \lambda_P s)$ for some $\lambda_P \in (-1,1) \setminus \{0\}$. We may extend $\ell_{P,t}$ to a compactly supported family $\hat{\ell}_t \colon N_1 \to \mr{Emb}((-1,1),(-1,1))$ that is supported in neighborhood of $P$ disjoint from all other components of $N_0 \cap N_1$. 

Now consider the family of maps $\tilde{\delta}_{1,t} = \tilde{\delta}_1 \circ (\hat{\ell}_t \times \mr{id}) \colon (-1,1) \times N_1 \to M$, which is given by reparametrizing $\tilde{\delta}_1$ in the normal direction. By transversality it is an embedding near $\{0\} \times N_1$ for all $t \in [0,1]$, and its restriction to $\{0\} \times N_1$ equals $\delta_1$ for all $t$. 

By first shrinking the tubular neighborhood and then applying the homotopy $\tilde{\delta}_{1,t}$, we may isotope $\tilde{\delta_1}$ so that $\tilde{\delta}_0^{-1} \circ \tilde{\delta}_{1,1}$ is of the form $(s,p) \mapsto (\lambda_P s,G'_P(s)(p))$ with $G'_P \colon (-1,1) \to \mr{Emb}(P,N_0)$ satisfying $G'_P(0)=\mr{id}$. Using isotopy extension and the fact that the space of embeddings of $P$ into $N_0$ is locally contractible (it is an infinite-dimensional manifold, see e.g. Chapter 13 of \cite{michor}), we can find an $\epsilon > 0$ and map $\phi \colon (-\epsilon,\epsilon) \to \mr{Diff}(N_0)$ sending $0$ to the identity and supported near $P$, so that $(s,p) \mapsto (\lambda_P s,\phi(s)G'_P(s)(p))$ is given by $(s,p) \mapsto (\lambda_Ps,p)$. After shrinking the tubular neighborhood, we get an isotopy $\tilde{\delta}_0 \circ (\mr{id},\phi(t \cdot -)) \circ \tilde{\delta}_0^{-1} \circ \tilde{\delta}_{1,t}$ which starts at $\tilde{\delta}_1$ and ends at a tubular neighborhood with the desired property.
\end{proof}

We will now show that without losing too much connectivity we can assume at most one disk attaches to each thickened circle.

\begin{definition}Let $Y_\circ(\bfC,M) \subset D_\circ(\bfC,M)$ denote the subcomplex where we additionally require that in a simplex $\{(\bfC,C_i),\delta_i)\}$ we have that $C_i \neq C_j$ if $i \neq j$.\end{definition}

\begin{lemma}\label{lem.ylcm} We have that $\mr{wCM}(Y_\circ(\bfC,M)) \geq k-1$.
\end{lemma}

\begin{proof}Consider the link of a simplex $\sigma$ in $Y_\circ(\bfC,M)$. If $C(\sigma)$ denotes the union of the thick circles in $\sigma$ and $\delta(\sigma)$ the union of the disks in $\sigma$, then this is isomorphic to 
	\[Y_\circ(\bfC \setminus C(\sigma),M \setminus (C(\sigma) \cup \delta(\sigma)))\] 
where $\bfC \setminus C(\sigma)$ consists of $k-|\sigma|-1$ thick circles. These are still unknotted, as by a surgery argument like in Lemma \ref{lem.dbfcmcontractible} we can make a bounding disk disjoint from $\delta(\sigma)$. Hence to prove that $\mr{wCM}(Y_\circ(\bfC,M)) \geq k-1$, it suffices to prove that $Y_\circ(\bfC,M)$ is $(k-2)$-connected.
	
This is a badness argument, by induction over $k$. The initial case $k=1$ is trivial. For the induction step, by simplicial approximation we can assume there is a PL triangulation $L_\circ$ of $S^i$ with $i \leq k-2$ and a simplicial map $f \colon L_\circ \to Y_\circ(\bfC,M)$. We need to prove this extends over a disk. Since $D_\circ(\bfC,M)$ is contractible, by simplicial approximation there is a PL triangulation $K_\circ$ of $D^{i+1}$ extending $L_\circ$ and simplicial map $F \colon K_\circ \to D_\circ(\bfC,M)$ extending $f$:
\[\xymatrix{S^i \cong L_\circ \ar[r]^-f \ar[d] & Y_\circ(\bfC,M) \ar[d] \\
D^{i+1} \cong K_\circ \ar[r]_-F & D_\circ(\bfC,M)}\]

We call a simplex $\sigma$ of $K_\circ$ \emph{bad} if all of the thick circles $C_i$ in $F(\sigma)$ are equal, so that $Y_\circ(\bfC,M) \subset D_\circ(\bfC,M)$ is the subcomplex without bad simplices. We show how to decrease the number of bad simplices in $K_\circ$ of maximal dimension by 1, changing $K_\circ$ and $F$ in the process. Eventually this process eliminates all bad simplices.
	
Suppose that $\sigma$ is a bad simplex of maximal dimension $1 \leq j \leq i$. Since $\sigma$ is maximal, $\mr{Link}_{\sigma}(K_\circ)$ maps into the subcomplex $Y_\circ(\bfC \setminus C(F(\sigma)),M \setminus (C(F(\sigma)) \cup \delta(F(\sigma))))$. Since $K_\circ$ is a PL triangulation of a PL manifold and $\sigma$ necessarily lies in its interior, we have $\mr{Link}_{\sigma}(K_\circ) \cong S^{i-j-1}$ and we can estimate $i-j-1 \leq k-j-3$.

 Note that the manifold $M \setminus (C(F(\sigma)) \cup \delta(F(\sigma)))$ is a disjoint union of $r$ manifolds $M_\ell$, each containing a configuration $\bfC_\ell \coloneqq \bfC \cap M_l$ of $k_\ell$ thickened circles, and that $\sum k_\ell = k-1$. A surgery argument as in the proof of Lemma \ref{lem.dbfcmcontractible} shows that $\bfC_\ell$ is unlinked in $M_\ell$. Hence we have that
 \[Y_\circ(\bfC \setminus C(F(\sigma)),M \setminus (C(F(\sigma)) \cup \delta(F(\sigma)))) \cong Y_\circ(\bfC_1,M_1) \ast \cdots \ast Y_\circ(\bfC_r,M_r)\]
By induction this has a connectivity of $-2+\sum_{\ell=1}^r k_\ell = -2+\sum k_\ell \leq k-3$ and thus we can find a triangulated PL disk $J_\circ$ of dimension $i-j$ with $\partial J_\circ = \mr{Link}_{\sigma}(K_\circ)$ and a map $\bar{F} \colon J_\circ \to Y_\circ(\bfC \setminus C(F(\sigma)),M \setminus (C(F(\sigma)) \cup \delta(F(\sigma))))$ which extends $f|_{\partial J_\circ}$. Consider the new map
	\[F' \colon (K_\circ \backslash \mr{int}(\mr{Star}_{K_\circ}(\sigma))) \cup_{\partial \mr{Star}_{K_\circ}(\sigma)} (\partial \sigma \ast J_\circ) \to D_\circ(\bfC,M)\]

Its domain is a PL disk, its restriction to the boundary equal $f$, and we removed the bad simplex $\sigma$. We did not introduce any new bad simplices of dimension $\geq j$, as all new simplices are of the form $\tau \ast \omega$ with $\tau \subset \partial \sigma$ and $\omega \subset J_\circ$, and that all thickened circles in $\bar{F}(\omega)$ are distinct.
\end{proof}

\subsection{Finding tethers} We want a version of $Y_\circ(\bfC,M)$ where circles are tethered to the boundary. We restrict to path-connected $M$, as before with a chart $\bR \times \bR \hookrightarrow \partial M$ and collar $\bR \times \bR \times [0,\infty) \hookrightarrow M$. We also pick a Riemannian metric on $M$ that equals the standard Euclidean metric on this collar. Let $\mr{Emb}^\mr{fr}([0,1],M)$ denote the space of embedded arcs with trivialization of their $2$-dimensional normal bundle, a subspace of $\mr{Map}([0,1] \times \bR^2,TM)$.

\begin{definition}Let $YT_\circ(\bfC,M)$ be the simplicial complex given as follows:
	\begin{itemize}
		\item vertices given by a triple 
		\[(v,\tau,\gamma) \in Y_0(\bfC,M) \times \bR \times \mr{Emb}^\mr{fr}([0,1],M \setminus C(\bfC))\]
		of a vertex $v = ((\bfC,C),\delta)$ of $Y_\circ(\bfC,M)$,  a real number $\tau \in \bR$ and an embedded arc $\gamma$ such that (i) $\gamma([0,1))$ is disjoint from the disk $\delta(D^2)$, (ii) $\gamma(1) = \delta(0)$, $T\gamma(1)$ is orthogonal to $T\delta(0)$ and the trivialization coincides with that coming from $T\delta(D^2)$, (iii) near $0$ the embedding $\gamma$ is given by $s \mapsto (\tau,0,s)$ and the trivialization coincides with the standard one.
		\item a $p$-simplex is a $(p+1)$-tuple $(v_i,\tau_i,\gamma_i)$ such that (i') the $v_i$ form a $p$-simplex in $Y_\circ(\bfC,M)$, (ii') $(\delta_i(D^2) \cup \gamma_i([0,1]))\cap(\delta_j(D^2) \cup \gamma_j([0,1])) = \varnothing$ if $i\neq j$, and (iii') $\tau_0< \ldots<\tau_p$.
			\end{itemize}
	\end{definition}

Note this is an ordered flag complex, and thus there is an associated semisimplicial set $YT_\bullet(\bfC,M)$ with homeomorphic geometric realization. To investigate the connectivity of $YT_\bullet(\bfC,M)$ we introduce an intermediate bisemisimplicial set. Note that $\sigma \in \mr{sd}_r(Y)_\bullet$ is a chain of simplices $\tau_0 \subset \ldots \subset \tau_p$ of $Y_\circ$ and we let $\min(\sigma) \coloneqq \tau_0$ denote the smallest one.

\begin{definition}The bisemisimplicial set $YT_{\bullet,\bullet}(r,\bfC,M)$ has $(p,q)$-simplices given by a pair of a $p$-simplex $\sigma$ of $\mr{sd}_r(Y)_\bullet(\bfC,M)$ and $q$-simplex $\tau$ of $YT_\bullet(\bfC,M)$ such that $C(\tau) \subset C(\min(\sigma))$.\end{definition}

Note that there are augmentations
\begin{equation}\label{eqn.zigzag} \mr{sd}_r(Y)_\bullet(\bfC,M) \overset{\epsilon_1}{\longleftarrow} YT_{\bullet,\bullet}(r,\bfC,M) \overset{\epsilon_2}{\longrightarrow} YT_\bullet(\bfC,M)\end{equation}
each given by forgetting the other simplicial direction.

\begin{lemma}The realization of $\epsilon_1$ is $(r-1)$-connected.\end{lemma}

\begin{proof}Given a simplex $\sigma$ of $\mr{sd}_r(Y)_\bullet(\bfC,M)$ the fiber of $\epsilon_1$ over it is the semisimplicial subset of $YT_\bullet(\bfC,M)$ whose disks coincide with elements of $\delta(\min(\sigma))$.  There is a corresponding simplicial complex with homeomorphic geometric realization which admits a simplicial map to the simplicial complex $\mr{Inj}_{\circ}(\delta(\max(\sigma)))$. Example \ref{exam.inj} says that $\mr{wCH}(\mr{Inj}_{\circ}(\delta(\min(\sigma)))) \geq r$. 
	
We will apply Proposition \ref{prop.lift} to show that the fiber is $(r-1)$-connected. Only condition (i) is not obvious. In words, we need to show that given a disk $\delta_0$ in $\delta(\min(\sigma))$ and a collection of vertices whose disks are in $\delta(\min(\sigma))$, but not equal to $\delta_0$, we can find a framed arc connecting $\delta_0$ to the boundary. For this it suffices to show that $\delta_0$ is not disconnected from the boundary chart by the disks or arcs of the other vertices. The disks of the other vertices can not disconnect $M$, since they form a simplex in $Y_\circ(\bfC,M)$ and thus up to homotopy removing them from $M$ is the same as removing points. The arcs of the other vertices can not disconnect $M$ since they are 1-dimensional subsets.

By Lemma \ref{lem.geomlevelwise}, the geometric realization of the map is $(r-1)$-connected.\end{proof}

\begin{lemma}The realization of $\epsilon_2$ is $(k-r-2)$-connected.\end{lemma}

\begin{proof}The fiber of $\epsilon_2$ over a simplex $\tau$ in $YT_\bullet(\bfC,M)$ is given by the semisimplicial subset of $\mr{sd}_r(Y)_\bullet(\bfC,M)$ spanned by the vertices $v$ such that the disks $\delta(\tau)$ of $\tau$ are among those in $v$. There are two cases. The first is $r\leq |\tau|$, and then there is an initial object given by $\delta(\tau)$, so that the fiber is contractible. The second is $r>|\tau|$, and then simplices $v$ of $Y_\circ(\bfC,M)$ such that the disks $\delta(\tau)$ are among those in $v$ are determined by their complement. Thus the complex is isomorphic to \[\mr{sd}_{r-|\tau|-1}(\mr{Link}_\tau(Y))_\bullet(\bfC \setminus C(\tau),M \setminus (\delta(\tau) \cup \gamma(\tau))))\]
which is $(k-|\tau|-3-r+|\tau|+1-1) = (k-r-3)$-connected by Lemma \ref{lem.subsetconstr}. By Lemma \ref{lem.geomlevelwise}, the geometric realization of the map is $(k-r-2)$-connected.\end{proof}

\begin{lemma}We have that $YT_\bullet(\bfC,M)$ is $\min(k-r-3,r-2)$-connected.\end{lemma}

\begin{proof}In the zigzag (\ref{eqn.zigzag}) the left-hand side is $(k-r-3)$-connected by Lemma's \ref{lem.subsetconstr} and \ref{lem.ylcm}, and the left map is $(r-1)$-connected, so the middle term is $\min(k-r-3,r-2)$-connected. Similarly, as the right map is $(k-r-2)$-connected the right-hand side is $\min(k-r-3,r-2)$-connected.
\end{proof}

Now we may set $r = \lfloor \frac{k-1}{2} \rfloor$ to see that $YT_\bullet(x,M)$ is $\lfloor\frac{k-5}{2}\rfloor$-connected. Note that the links of simplices of $YT_\bullet(\bfC,M)$ are essentially of the same form, differing only in where arcs can attach to the boundary. A similar argument thus gives that the link of a $p$-simplex is $\lfloor\frac{k-p-6}{2}\rfloor$-connected. Since $\lfloor\frac{k-p-6}{2}\rfloor \geq \lfloor\frac{k-5}{2}\rfloor-p-1$, we conclude that $\mr{wCH}(YT_\bullet(x,M)) \geq \lfloor\frac{k-3}{2}\rfloor$:

\begin{proposition}\label{prop.ytconn} For all path-connected 3-manifolds $M$ and $\bfC \in \cC^\mr{th}_k(M)$, we have that $\mr{wCH}(YT_\bullet(\bfC,M)) \geq \lfloor\frac{k-3}{2}\rfloor$.\end{proposition}

\subsection{The proof of Theorem \ref{thm.resconn}} We finish with the proof of Theorem \ref{thm.resconn}, which says that the augmentation map
\[||X_\bullet(k,M)||\lra \cC_k(M)\]
is $\lfloor \frac{k-3}{2}\rfloor$-connected. The isotopy extension theorem implies that for each $p$ the map $X_p(k,M) \to \cC_k(M)$ is a Serre fibration. Then Lemma 2.1 of \cite{oscarresolutions} says that the same is the case for the map $||X_\bullet(k,M)||\to \cC_k(M)$. Thus it suffices to prove that the fibers $||X_\bullet(\bfc,k,M)||$ over $\bfc \in \cC_k(M)$ are $\lfloor \frac{k-5}{2}\rfloor$-connected. This is done in the remainder of this subsection by working towards $X_\bullet(\bfc,k,M)$ through a number of intermediate semisimplicial spaces.

\begin{definition}Let $M$ be path-connected, $k \geq 0$ and $\bfC \in \cC^\mr{th}_k(M)$. The semisimplicial space $XT^\mr{th}_\bullet(\bfC,k,M)$ is given as follows:
	\begin{itemize}
		\item the space of $0$-simplices consists of data 
		\[((\bfC,C),\tau,\gamma,\delta) \in \tilde{\cC}^\mr{th}_k(M) \times \bR \times \mr{Emb}^\mr{fr}([0,1],M \setminus C(\bfC)) \times \mr{Emb}^\mr{neat}(D^2,M\setminus \mr{int}(C(\bfC)))\]
		such that (i) $\delta(\partial D^2)$ lies in $\partial C$ and $\delta|_{\partial D^2}$ represents a generator of $\pi_1(C)$, (ii) $\gamma([0,1))$ is disjoint from the disk $\delta(D^2)$, (iii) $\gamma(1) = \delta(0)$, $T\gamma(1)$ is orthogonal to $T\delta(0)$ and the trivialization coincides with that coming from $T\delta(D^2)$, and (iv) near $0$ the embedding $\gamma$ is given by $s \mapsto (\tau,0,s)$ and the trivialization coincides with the standard one.
		\item a $p$-simplex is a $(p+1)$-tuple of $((\bfC_i,C_i),\tau_i,\gamma_i,\delta_i)$ such that (i') $C_i \neq C_j$ if $i \neq j$, (ii') $(\delta_i(D^2) \cup \gamma_i([0,1]))\cap(\delta_j(D^2) \cup \gamma_j([0,1])) = \varnothing$ if $i \neq j$, and (iii') $\tau_0< \ldots<\tau_p$.
	\end{itemize}
\end{definition}

\begin{lemma}For all path-connected $M$, $k \geq 0$ and $\bfC \in \cC^\mr{th}_k(M)$, the realization of the semisimplicial space $XT^\mr{th}_\bullet(\bfC,k,M)$ is $\lfloor\frac{k-5}{2}\rfloor$-connected.\end{lemma}

\begin{proof}Apply Proposition \ref{prop.top} using Proposition \ref{prop.ytconn}.
\end{proof}

\begin{definition}Let $M$ be path-connected, $k \geq 0$ and $\bfC \in \cC^\mr{th}_k(M)$. The semisimplicial space $X_\bullet^\mr{th}(\bfC,k,M)$ is given as follows:
	\begin{itemize}
		\item the space of $0$-simplices consists of the data 
		\[((\bfC,C),\tau,\epsilon,\eta) \in \tilde{\cC}^\mr{th}_k(M) \times \bR \times (0,1) \times \mr{Emb}(D^2 \times [0,1],M \setminus \mr{int}(C(\bfC)))\]
		such that (i) $\eta|_{D^2 \times \{1\}}$ is neat, (ii) $\eta(D^2 \times [0,1]) \cap C(\bfC) \subset C$, (iii) $\eta^{-1}(C) = \partial D^2 \times \{1\}$, (iii) $\eta|_{\partial D^2 \times \{1\}}$ represents a generator of $\pi_1(C)$, and (iv) near $D^2 \times \{0\}$ the embedding $\eta$ is given by $(x,y,s) \mapsto (\epsilon x+\tau,\epsilon y,s)$.
		\item a $p$-simplex is a $(p+1)$-tuple of $((\bfC_i,C_i),\tau_i,\epsilon_i,\eta_i)$ such that (i') $C_i \neq C_j$ if $i \neq j$, (ii')  $\eta_i(D^2 \times [0,1]) \cap \eta_j(D^2 \times [0,1]) = \varnothing$ if $i \neq j$, and (iii') $\tau_0< \ldots<\tau_p$.
	\end{itemize}
\end{definition}

Note that there is a semisimplicial map
\begin{align*}X^\mr{th}_\bullet(\bfC,k,M) &\to XT_\bullet(\bfC,k,M) \\
((\bfC,C_i),\tau_i,\epsilon_i,\eta_i) &\mapsto ((\bfC,C_i),\tau_i,\eta_i|_{\{0\} \times [0,1]},\eta_i|_{D^2 \times \{1\}})
\end{align*}
where the normal bundle to $\eta_i|_{\{0\} \times [0,1]}$ is trivialized by the derivative of $\eta_i$ at $\{0\} \times [0,1]$ in $D^2 \times [0,1]$.

\begin{lemma}\label{lem.xthtoxtweq} The map $||X^\mr{th}_\bullet(\bfC,k,M)|| \to ||XT_\bullet(\bfC,k,M)||$ is a weak equivalence.
\end{lemma}

\begin{proof}By Lemma \ref{lem.geomlevelwise} it suffices to prove that the map is a levelwise weak equivalence. By isotopy extension the map restricting the $p$-simplices of both sides to $(p+1)$-tuple of disks is a fibration, so we may assume the disks to be fixed. Similarly, we may assume the arcs to be fixed. Thus it suffices show that the space of embeddings $D^2 \times [0,1] \hookrightarrow M \setminus \mr{int}(C(\bfC))$ fixed on $D^2 \times \{0,1\} \times \{0\} \times [0,1]$ is weakly equivalent to the space of trivializations of the normal bundle to $[0,1] \to  M \setminus \mr{int}(C(\bfC))$ standard at the endpoints. The space of embeddings $D^2 \times [0,1] \hookrightarrow M \setminus \mr{int}(C(\bfC))$ equal to a fixed embedding on $(D^2 \times \{0,1\}) \cup (\{0\} \times [0,1])$ is weakly equivalent to the space of such immersions, and Smale-Hirsch says this is weakly equivalent to the space of trivializations.\end{proof}

Next, we first pick a diffeomorphism from each thick circle $C_i \in \bfC$ to $D^2 \times S^1$. This in particular defines a smooth map $\pi_i \colon \partial C_i \to S^1$ by projecting $D^2$ to $0$.

\begin{definition}We let $X^\mr{th,st}_\bullet(\bfC,k,M) \subset X^\mr{th}_\bullet(\bfC,k,M)$ be the subsemisimplicial space consisting of $((\bfC,C_i),\tau_i,\gamma_i,\delta_i)$ such that the composite map $\pi_i \circ \delta|_{\partial D^2}$ is a submersion.\end{definition}

\begin{lemma}The map $||X^\mr{th,st}_\bullet(\bfC,k,M)|| \to ||X^\mr{th}_\bullet(\bfC,k,M)||$ is a weak equivalence.\end{lemma}

\begin{proof}By Lemma \ref{lem.geomlevelwise} it suffices to prove it is a levelwise weak equivalence. By the isotopy extension theorem it suffices to prove that if $E_i \subset \mr{Emb}(S^1,\partial C_i)$ denotes the subspace of embeddings representing a generator of $\pi_1(C)$, then the inclusion of the further subspace $E^\mr{st}_i \subset E_i$ of $\phi$ such that $\pi_i \circ \phi_i$ is a submersion, is a weak equivalence. But both have the same set of components, and in both cases the components is weakly equivalent to the subspace of embeddings that are geodesics for the flat metric, e.g. by a curve shortening argument \cite{grayson}.\end{proof}

Next we note that if we pick disjoint embeddings $\bR^2 \times S^1 \hookrightarrow M$ extending diffeomorphism of $C_i$ with $D^2 \times S^1$, there is a collapse map $r \colon M \to M$ obtained as follows. Let $r_0 \colon [0,\infty) \to [0,\infty)$ be a smooth map that is the identity on $(2,\infty)$, an embedding on $(1,2]$ and constant $0$ on $[0,1]$. In the local model consider the map given by $\bR^2 \times S^1  \ni (r,\theta,\phi) \mapsto (r_0(r),\theta,\phi) \in \bR^2 \times S^1$. To obtain $r$ we extend $k$ of these maps by the identity. 

Let $\bfC \in \cC^\mr{tk}_k(M)$ be a thickening of $\bfc \in \cC_k(M)$, which always exists by the tubular neighborhood theorem. Then there is a semisimplicial map
\begin{align*}X^\mr{th}_\bullet(\bfC,k,M) &\to X_\bullet(\bfc,k,M) \\
((\bfC,C_i),\tau_i,\epsilon_i,\eta_i) &\mapsto ((\bfc,r(C_i)),\tau_i,\epsilon_i,r \circ \eta_i)
\end{align*}

\begin{lemma}The map $||X^\mr{th}_\bullet(\bfC,k,M)|| \to ||X_\bullet(\bfc,k,M)||$ is a weak equivalence.\end{lemma}

\begin{proof}In fact, it is a levelwise homeomorphism by blowing up along the circles and identifying the new boundary, given by unit normal bundle of the circles, with $\partial C$.\end{proof}

\section{The proof of the main theorem} We will now give the standard spectral sequence argument that completes the proof of Theorem \ref{thm.main}, as used in e.g. \cite{hatcherwahl, palmerthesis, grwstab1, RW,perlmutterstab}. Thus uses the \emph{geometric realization spectral sequence}. For an augmented semisimplicial space $X_\bullet$, this is given by
\[(E^1_{p,q},d^1) = \left(H_q(X_p),{\textstyle \sum}_i (-1)^i (d_i)_*\right) \Rightarrow H_{p+q+1}(X_{-1},||X_\bullet||)\]
That is, the $E^1$-page has non-zero columns for $p \geq -1$, given by the homology of the $p$-simplices. The $d^1$-differential $d^1 \colon E^1_{p+1,q} \to E^1_{p,q}$ is  the alternating sum of the maps induced by the face maps when $p \geq 0$, and is the map induced by augmentation when $p = -1$.

We will apply this to the augmented semisimplicial space $X_\bullet(k,M)$ and in the remainder of this section we will do the following:
\begin{enumerate}
	\item Identify the target of the spectral sequence.
	\item Identify the $E^1$-page.
	\item Identify the $d^1$-differential.
	\item Finish the proof of the homological stability argument.
\end{enumerate}

\subsection{Identifying the target} This first step is easy given the results we have already proven. The following is a direct consequence of Theorem \ref{thm.resconn}.

\begin{proposition}\label{prop.sstarget} We have that 
	\[H_{*+1}(\cC_k(M),||X_\bullet(k,M)||) = 0\]
 for $* \leq \lfloor \frac{k-5}{2}\rfloor$.\end{proposition}

\subsection{Identifying the $E^1$-page} The next step is to identify the $E^1$-page. The resolution was chosen to make this particularly easy.

\begin{proposition}\label{prop.sse1page} For all $p \geq 0$ we have that
	\[X_p(k,M) \simeq \cC_{k-p-1}(M)\]
\end{proposition}

\begin{proof}Consider the subspace $X_p(k,M)$ where $\eta_i$ is equal to one of the standard maps 
	\[\eta_{\epsilon_i,\tau_i} \colon (x,y,s) \mapsto (\epsilon_i \cdot x+\tau_i,\epsilon_i \cdot y,s)\]
The inclusion into $X_p(k,M)$ is a weak equivalence: pull the circles back along $\eta_i$ to the boundary, reparametrizing $\eta_i$ in the process, and extend $\eta_i$ back out. A linear interpolation gives a deformation retract of this subspace onto those $((\bfc,c_i),\tau_i,\epsilon_i,\eta_{\epsilon_i,\tau_i})$ with $\tau_i = i$ and $\epsilon_i = 1/4$. This is homeomorphic to $\cC_{k-p-1}(M_p)$ where $M_p$ is obtained from $M$ by removing the interior of the cylinders $\eta_{1/4,i}(D^2 \times [0,1])$ for $0 \leq i \leq p$. The interior of the manifold $M_p$ is diffeomorphic to interior of $M$, so that $\cC_{k-p-1}(M_p)$ is homeomorphic to $\cC_{k-p-1}(M)$.
\end{proof}

\subsection{Identifying the $d^1$-differential} The proof of Proposition \ref{prop.sse1page} makes it easy to identify the $d^1$-differential.

\begin{proposition}\label{prop.ssd1diff} We have that
	\[d^1 \colon E^1_{p,q} \cong H_*(\cC_{k-p-1}(M)) \lra E^1_{p-1,q} \cong H_*(\cC_{k-p}(M))\]
is given by $t_*$ if $p$ is even and $0$ is $p$ is odd.
\end{proposition}

\begin{proof}It suffices to note that on the subspace $\cC_{k-p-1}(M_p) \hookrightarrow X_p(k,M)$ the face map $d_i$ is given by adding the $i$th cylinder $\eta_{1/4,i}(D^2 \times I)$ back to $M_p$ and adding the circle $\eta_{1/4,i}(\partial D^2 \times \{1\})$ to the configuration of circles. Each of these maps is easily seen to be homotopic to $t$.\end{proof}

\subsection{Finishing the proof} We now finish the proof of the homological stability argument. We give a small sharpening afterwards.

\begin{proposition}We have that $H_*(\cC_{k}(M),\cC_{k-1}(M)) = 0$ for $* \leq \lfloor \frac{k-4}{2}\rfloor$.\end{proposition}

\begin{proof}[Proof of Theorem \ref{thm.main}] We want to prove that 
	\[t_* \colon H_*\left(\cC_{k-1}(M) \right) \lra H_*\left(\cC_{k}(M)\right)\]
	is an isomorphism in the range $* \leq \lfloor\frac{k-6}{2}\rfloor$ and a surjection in the range $* \leq \lfloor\frac{k-4}{2}\rfloor$, using the geometric realization spectral sequence associated to the resolution $X_\bullet(k,M)$. We only give an outline of the argument, since it is well-known, see e.g. \cite{hatcherwahl, palmerthesis, grwstab1, RW,perlmutterstab}
	
	By Proposition \ref{prop.sse1page}, the augmented geometric realization spectral sequence for $X_\bullet(k,M)$ has $E^1$-page given by
	\[E^1_{p,q} = \begin{cases} H_q(\cC_k(M)) & \text{if $p = -1$} \\
	H_q(\cC_{k-p-1}(M))) & \text{if $p\geq 0$}\end{cases}\]
	By Proposition \ref{prop.sstarget}, it converges to $0$ in the range $p+q \leq  \lfloor \frac{k-5}{2} \rfloor$. Using Proposition \ref{prop.ssd1diff}, the $d^1$-differential as alternatively $t_*$ and $0$. Thus the $E^2$-page has columns given by
	\[E^2_{p,q} = \begin{cases} \mr{coker}[t_* \colon H_q(\cC_{k-p-2}(M)) \to H_q(\cC_{k-p-1}(M))] & \text{if $p$ is odd} \\
	\mr{ker}[t_* \colon H_q(\cC_{k-p-1}(M)) \to H_q(\cC_{k-p}(M))] & \text{if $p$ is even}\end{cases}\] 
	
	The proof is then by induction over $k$ of the statement that $t_ *$ is an isomorphism for $* \leq \lfloor\frac{k-6}{2}\rfloor$ and a surjection for $* \leq \lfloor\frac{k-4}{2}\rfloor$. By the induction hypothesis the $E^2$-page vanishes when $p \geq 1$ and $p+q \leq \lfloor \frac{k-4}{2} \rfloor$. This and the fact that the spectral sequence converge to $0$ in the range  $p+q \leq  \lfloor \frac{k-5}{2}\rfloor$, are the input to a spectral sequence argument that concludes in the vanishing of the columns $p=-1,0$ in a range, proving the inductive step.
\end{proof}

Finally, we show that $t_*$ is always injective.

\begin{lemma}The stabilization map $t_* \colon H_*(\cC_{k}(M)) \to H_*(\cC_{k+1}(M))$ is injective for all $k \geq 0$.\end{lemma}

\begin{proof}The argument is similar to that for configuration spaces in Section 7 of \cite{RW}. It follows directly by applying Lemma 2 of \cite{Do} to transfer maps
\[\tau_{k,p} \colon H_*(\cC_{k}(M)) \lra H_*(\cC_{k-p}(M))\]
obtained by summing over all ways of deleting $p$ circles.
\end{proof}

\section{Generalizations and applications}\label{sec.generalizations} In this section we describe how to obtain generalizations of the previous result, and show that one of these implies Corollary \ref{corhomstabdiff}.

\subsection{Circles with additional structure}To define $\cC_k(M)$ we took the quotient of the space of ordinary unlinked embeddings of circles by the group $\mr{Diff}(\sqcup_k S^1) \cong \mr{Diff}(S^1) \wr \fS_k$ of all diffeomorphisms. One can generalize this by changing the words ``ordinary'' and ``all'' in the previous sentence. For example, one can replace $\mr{Diff}(S^1)$ by the subgroup $\mr{Diff}_+(S^1)$ of orientation preserving diffeomorphisms. 

The example of spaces of parametrized circles with a trivialization of their normal bundle of self-linking number $0$ will be important in the next subsection. Suppose $c \subset M$ is an oriented embedded circle and assume its normal bundle $\nu_c$ is orientable (it will be when $c$ is unlinked). Since a 2-dimensional oriented vector bundle over a circle is trivial, $\nu_c$ in particular admits a non-zero section $s$. The set of homotopy classes of non-zero sections is a torsor for $\pi_0(\mr{Map}(c,\mr{GL}^+_2(\bR))) \cong \bZ$.

Further suppose $c$ is null-homologous, so that it bounds a surface $L$. Given a section $s$ of the normal bundle $\nu_c$ of $c$, push $c$ in the direction $s$ to obtain the \emph{push-off} $p_{s}(c)$ of $c$. This is well-defined up to isotopy and without loss generality transverse to $L$. The \emph{self-linking number} $\mr{slk}_s(c)$ of $c$ is then defined to be the number of intersections of $p_s(c)$ with $L$, counted with sign. The number $\mr{slk}_s(c)$ depends only on the homotopy class of $s$, and if we act by $n \in \pi_0(\mr{Map}(c,\mr{GL}_2^+(\bR))) \cong \bZ$ the linking number also changes by $n$. Thus there is a canonical homotopy class $[s_0]$ of sections defined by demanding $\mr{slk}_{s_0}(c) = 0$. We say that an element in this homotopy class is \emph{of self-linking number zero}. Using the orientation of circle, there is up to homotopy a canonical second section of $\nu_c$, so that there is in fact a canonical homotopy class of trivializations of $\nu_c$ of self-linking number zero.

For $\bfc$ of $\mr{Emb}^{\mr{unl}}(\bigsqcup_k S^1,D^3)$, let $\mr{Triv}_{\mr{slk}=0}(\nu_{\bfc})$ be the product over $c \in \bfc$ of the spaces of trivializations of the normal bundle $\nu_{c}$ of $c \in \bfc$ of self-linking number zero. These spaces form a fiber bundle $\mr{Triv}_{\mr{slk}=0}(k,M)$ over $\mr{Emb}^{\mr{unl}}(\bigsqcup_k S^1,D^3)$ with an $\mathfrak S_k$-action extending that on unlinked embeddings. 

\begin{definition}The \emph{space of $k$ unlinked parametrized circles with trivializations of their normal bundles of self-linking number zero} is given by
\[\cC^\mr{triv}_k(M) \coloneqq \mr{Triv}_{\mr{slk}=0}(k,M)/\mathfrak S_k\]
\end{definition} 

As $\cC^\mr{triv}_k(-)$ is a continuous functor on the topological  category of 3-manifolds and embeddings, we can define a stabilization map for $\cC^\mr{triv}_k(M)$ like we did for $\cC_k(M)$ in the introduction.

\begin{corollary}\label{cor.normal} The map $t_* \colon H_*(\cC^\mr{triv}_k) \to H_*(\cC^\mr{triv}_{k+1})$ is always injective and an isomorphism for $* \leq \lfloor \frac{k-2}{2} \rfloor$.\end{corollary}

\begin{proof}The parametrizations of the circle and trivializations of the normal bundle can be carried along during most of the argument and one can use a similar semisimplicial resolution, with the slight modification that one should require $\eta|_{\partial D^2 \times \{1\}}$ has the same orientation as the circle it is attached to. Then in the statement of Proposition \ref{prop.sse1page} we pick up a term of the form $X^{p+1}$, with $X$ path-connected (this required the small modification to the resolution). This in turn requires a slight modification to the spectral sequence argument, as for the labeled configuration spaces in Section 6 of \cite{RW}.\end{proof}

\begin{remark}In general one can prove homological stability as long as the space of additional structures on each circle is either path-connected, or has two components corresponding to the two different orientations of the circle. The latter is the case in Corollary \ref{cor.normal}.\end{remark}

\subsection{Homological stability for diffeomorphisms} \label{subsec.diffeomorphisms} We now prove homological stability for diffeomorphisms of connected sums of a path-connected manifold $M$ with $k$ copies of $D^2 \times S^1$, permuting the torus boundary components and fixing pointwise the original boundary. This is the most restricted version of the diffeomorphism group for which one can prove homological stability. The group of diffeomorphisms of $M \#_k D^2 \times S^1$ fixing the entire boundary is a candidate for representation stability instead. See \cite{hanglamthesis} for other homological stability results for diffeomorphism groups of 3-manifolds.

Let $M \#_k D^2 \times S^1$ denote the connected sum of $M$ with $k$ copies of $D^2 \times S^1$. This has boundary $\partial M$ together with $k$ additional boundary components diffeomorphic to a torus $\bT^2$. For each of these we fix a diffeomorphism with $\bT^2$.

\begin{definition}Let $\mr{Diff}_{\partial M}(M \#_k D^2 \times S^1,\mathfrak S_\partial)$ be the topological group of diffeomorphisms of $M \#_k D^2 \times S^1$ which fix $\partial M$ pointwise and permute the boundary components preserving their identifications with $\bT^2$, with the $C^\infty$-topology.\end{definition}

\begin{lemma}\label{lem.cctrivident} We have that $\cC^\mr{triv}_k(M) \simeq \mr{Emb}^\mr{unl}(\sqcup_k D^2 \times S^1,M)/\fS_k$.
\end{lemma}

\begin{proof} There is a map $\mr{Emb}^\mr{unl}(\bigsqcup_k D^2 \times S^1,M)/\fS_k \to \cC^\mr{trivn}_k(M)$ given by restricting to $\bigsqcup_k \{0\} \times S^1$ and using the trivialization induced by the differential at $\{0\} \times S^1$. This is a weak equivalence by a similar argument as in Lemma \ref{lem.xthtoxtweq}.
\end{proof}

\begin{proof}[Proof of Corollary \ref{corhomstabdiff}] In light of Lemma \ref{lem.cctrivident} let us shorten $\mr{Emb}^\mr{unl}(\sqcup_k D^2 \times S^1,M)/\fS_k$ to $\cC^\mr{th,triv}_k(M)$. Isotopy extension implies that the action of $\mr{Diff}_\partial(M)$ on $\cC^\mr{th,triv}_k(M)$ induces a fiber sequence
	\[\cC^\mr{th,triv}_k(M) \to B\mr{Diff}_{\partial M}(M \#_k D^2 \times S^1,\mathfrak S_\partial) \to B\mr{Diff}_\partial(M)\]
The stabilization map on $\mr{Emb}^\mr{unl}(\sqcup_k D^2 \times S^1,M)/\fS_k$ induces one on $ B\mr{Diff}_{\partial M}(M \#_k D^2 \times S^1,\mathfrak S_\partial)$ such that following diagram commutes
\[\xymatrix{\cC^\mr{th,triv}_k(M) \ar[r]^-t \ar[d] & \cC^\mr{th,triv}_{k+1}(M) \ar[d] \\
 B\mr{Diff}_{\partial M}(M \#_k D^2 \times S^1,\mathfrak S_\partial)  \ar[r]^-{t'} \ar[d] &  B\mr{Diff}_{\partial M}(M \#_{k+1} D^2 \times S^1,\mathfrak S_\partial) \ar[d] \\
 B\mr{Diff}_\partial(M) \ar@{=}[r] &  B\mr{Diff}_{\partial}(M)
}\]
This gives rise to a relative Serre spectral sequence
\[\xymatrix{E^2_{pq} = H_p(B\mr{Diff}_\partial(M);H_q(\cC^\mr{th,triv}_{k+1}(M),\cC^\mr{th,triv}_k(M))) \ar@{=>}[d] \\ H_{p+q}(B\mr{Diff}_{\partial M}(M \#_{k+1} D^2 \times S^1,\mathfrak S_\partial),B\mr{Diff}_{\partial M}(M \#_{k} D^2 \times S^1,\mathfrak S_\partial))}\]
That $t'$ induces an isomorphism or surjection in the desired range follows from the fact that $t$ does by Corollary \ref{cor.normal} and Lemma \ref{lem.cctrivident}.
\end{proof}

\bibliographystyle{unsrt}
\bibliography{thesis}

\def\cprime{$'$}
\begin{thebibliography}{10}

\bibitem{palmerthesis}
Martin Palmer.
\newblock Configuration spaces and homological stability, 2012.

\bibitem{hatcherbrendle}
Tara~E. Brendle and Allen Hatcher.
\newblock Configuration spaces of rings and wickets.
\newblock {\em Comment. Math. Helv.}, 88(1):131--162, 2013.

\bibitem{hatcherwahl}
Allen Hatcher and Nathalie Wahl.
\newblock Stabilization for mapping class groups of 3-manifolds.
\newblock {\em Duke Math. J.}, (155):205--269, 2010.

\bibitem{wilsonstring}
Jennifer C.~H. Wilson.
\newblock Representation stability for the cohomology of the pure string motion
  groups.
\newblock {\em Algebr. Geom. Topol.}, 12(2):909--931, 2012.

\bibitem{wilsonstring2}
Jennifer C.~H. Wilson.
\newblock {$\rm FI_W$}-modules and stability criteria for representations of
  classical {W}eyl groups.
\newblock {\em J. Algebra}, 420:269--332, 2014.

\bibitem{griffindiagonal}
James~T. Griffin.
\newblock Diagonal complexes and the integral homology of the automorphism
  group of a free product.
\newblock {\em Proc. Lond. Math. Soc. (3)}, 106(5):1087--1120, 2013.

\bibitem{RW}
Oscar Randal-Williams.
\newblock Homological stability for unordered configuration spaces.
\newblock {\em Quarterly Journal of Mathematics}, (64 (1)):303--326, 2013.

\bibitem{gmtw}
S{\o}ren Galatius, Ulrike Tillmann, Ib~Madsen, and Michael Weiss.
\newblock The homotopy type of the cobordism category.
\newblock {\em Acta Math.}, 202(2):195--239, 2009.

\bibitem{M}
J.~P. May.
\newblock {\em The geometry of iterated loop spaces}.
\newblock Springer-Verlag, Berlin, 1972.
\newblock Lecture Notes in Mathematics, Vol. 271.

\bibitem{grwstab1}
Soren Galatius and Oscar Randal-Williams.
\newblock Homological stability for moduli spaces of high dimensional
  manifolds, {I}.
\newblock {\em preprint}, 2014.
\newblock \url{http://arxiv.org/abs/1403.2334}.

\bibitem{perlmutterstab}
Nathan Perlmutter.
\newblock Homological stability for diffeomorphism groups of high dimensional
  handlebodies.
\newblock {\em preprint}, 2015.
\newblock \url{http://arxiv.org/abs/1510.02571}.

\bibitem{hatcherautfn}
Allen Hatcher.
\newblock Homological stability for automorphism groups of free groups.
\newblock {\em Comment. Math. Helv.}, 70(1):39--62, 1995.

\bibitem{hanglamthesis}
Chor Hang~Lam.
\newblock Homological stability for diffeomorphism groups of 3-manifolds, 2015.

\bibitem{rolfsen}
Dale Rolfsen.
\newblock {\em Knots and links}.
\newblock Publish or Perish, Inc., Berkeley, Calif., 1976.
\newblock Mathematics Lecture Series, No. 7.

\bibitem{michor}
Peter~W. Michor.
\newblock {\em Manifolds of differentiable mappings}, volume~3 of {\em Shiva
  Mathematics Series}.
\newblock Shiva Publishing Ltd., Nantwich, 1980.

\bibitem{oscarresolutions}
Oscar Randal-Williams.
\newblock Resolutions of moduli spaces and homological stability.
\newblock {\em J. Eur. Math. Soc. (JEMS)}, 18(1):1--81, 2016.

\bibitem{grayson}
Matthew~A. Grayson.
\newblock Shortening embedded curves.
\newblock {\em Ann. of Math. (2)}, 129(1):71--111, 1989.

\bibitem{Do}
Albrecht Dold.
\newblock Decomposition theorems for {$S(n)$}-complexes.
\newblock {\em Ann. of Math. (2)}, 75:8--16, 1962.

\end{thebibliography}

\end{document}